\documentclass[a4paper,leqno,english,10pt]{article}
\usepackage[totalheight=26 true cm, totalwidth=17 true cm,]{geometry}

\usepackage{color}
\usepackage{xcolor}
\usepackage{amsthm,amsmath,amssymb,amsfonts,latexsym,url}
\usepackage{bm,enumitem}
\usepackage[colorlinks=false]{hyperref}

\usepackage[utf8]{inputenc}
\usepackage[english]{babel}


\newtheorem{thm}{Theorem}[section]
\newtheorem{cor}[thm]{Corollary}
\newtheorem{lemma}[thm]{Lemma}

\theoremstyle{definition}
\newtheorem{defn}[thm]{Definition}

\theoremstyle{remark}
\newtheorem{rmk}[thm]{Remark}
\newtheorem{exa}[thm]{Example}

\numberwithin{equation}{section}

\def\$$endproof{\eqno{\qedhere}$$\end{proof}}

\def\cL{\mathcal{L}}
\def\cR{\mathcal{R}}

\def\C{\mathbb{C}}
\def\F{\mathbb{F}}
\def\K{\mathbb{K}}
\def\Q{\mathbb{Q}}
     
\def\Z{\mathbb{Z}}


\def\l{\left}
\def\r{\right}
\def\wtilde{\widetilde}

\def\a{\alpha}
\def\be{\beta}
\def\GL{{\rm GL}}
\def\ord{{\rm ord}}


\setlength{\marginparwidth}{2.3cm}
\usepackage[textwidth=2cm]{todonotes}


\title{A Galoisian proof of Ritt theorem on the differential transcendence of Poincaré functions}

\author{
Lucia Di Vizio\footnote{Lucia Di Vizio, CNRS, Université Paris-Saclay, UVSQ, Laboratoire de mathématiques de Versailles, 78000, Versailles, France.
{\tt lucia.di.vizio@math.cnrs.fr}, \url{https://divizio.perso.math.cnrs.fr}}~
\&
Gwladys Fernandes\footnote{Gwladys Fernandes, Université Paris-Saclay, UVSQ, CNRS, Laboratoire de mathématiques de Versailles, 78000, Versailles, France.
{\tt gwladys.fernandes@uvsq.fr}, \url{https://fernandes.perso.math.cnrs.fr}}}

\begin{document}
\bibliographystyle{amsalpha}

\maketitle

\begin{abstract}
Using Galois theory of functional equations, we give a new proof of the main result of the paper ``Transcendental transcendency of certain functions of Poincaré''
 by J.F.~Ritt, on the differential transcendence of the solutions of the functional equation
 $R(y(t))=y(qt)$, where $R(t)\in\C(t)$ verifies $R(0)=0$, $R'(0)=q\in\C$, with $|q|>1$.
We also give a partial result in the case of an algebraic function $R$.
\par
2010 Mathematics Subject Classification: 30D05, 34M15, 39B12.
\end{abstract}

\section{Introduction}

\let\thefootnote\relax\footnote{This project has received funding from the ANR project
\href{https://specfun.inria.fr/chyzak/DeRerumNatura/}{DeRerumNatura,
ANR-19-CE40-0018}.}

We fix a rational function $R$, with complex coefficients, such that $R(0)=0$, $R'(0)=q\in\C$, with $|q|>1$.
To linearize the rational map $R$ at $0$, one has to solve the functional equation
\begin{equation}\label{eq:MainFunctEq}
R(\sigma(t))=\sigma(qt).
\end{equation}
It means that, up to a conjugation by $\sigma$, the rational function $R$ acts linearly in the neighborhood of $0$.
H.~Poincaré noticed that such an equation admits a formal solution $\sigma\in t\C[[t]]$, called a Poincaré function,
which is actually the expansion at zero of a uniform function, i.e. a meromorphic function over the whole $\C$. See \cite[page 318]{poincare-1890}.
The functional equation~\eqref{eq:MainFunctEq} plays a key role in rational dynamics and for this reason
has been studied by many authors, in particular relaxing the assumption on the absolute value of $q$.
\par
In \cite{ritt1926transcendental}, J.F.~Ritt addresses the question of the differential algebraicity of $\sigma$ over the field of rational functions
$\C(t)$, i.e., the fact that $\sigma$ is solution of an algebraic differential equation with coefficients in $\C(t)$.
This means that there exists a non-negative integer $n$ and a non-zero polynomial $P\in\C[t,X_0,X_1,\dots,X_n]$ such that
$P(t,\sigma,\sigma',\dots,\sigma^{(n)})=0$. We say that $\sigma$ is differentially transcendental over $\C(t)$ if it is not differentially algebraic.
Ritt's paper is at the origin of a large literature on differential transcendence of solutions of functional equations
linked to dynamical systems, which is surveyed in \cite{Fernandes-Dracula}.

\begin{thm}[{\cite{ritt1926transcendental}}]
\label{thm:Ritt}
Let $\sigma$ be a solution of a functional equation of the form~\eqref{eq:MainFunctEq},
{where $R(t)\in\C(t)$ is not a homography and $R(0)=0$, $R'(0)=q\in\C$, with $|q|>1$.}
If $\sigma$ satisfies an algebraic differential equation then it is
the composition of a homography with a periodic function belonging to the following list:
$\exp(\a t^p)$, $\cos(\a t^p+\be)$,
$\wp(\a t^p+\be)$, $\wp^2(\a t^p+\be)$, $\wp^3(\a t^p+\be)$ or $\wp'(\a t^p+\be)$, where $\wp$ is the Weierstrass function and
for a convenient choice of $p\in\Q$, of $\a\in\C$ and of a fraction $\be\in\C$ of the period.
\end{thm}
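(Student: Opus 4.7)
The natural route, suggested by the title, is to combine the parametrized Picard--Vessiot theory of Hardouin--Singer for linear $q$-difference equations with the classical classification of rational maps that are semi-conjugate to a linear map (power maps, Chebyshev polynomials and Lattès maps).

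First, I would linearize~\eqref{eq:MainFunctEq} by differentiation. Setting $u:=\sigma'$, the identity $q\sigma'(qt)=R'(\sigma(t))\sigma'(t)$ shows that $u$ satisfies the rank-one linear $q$-difference equation $\phi_q u = a\,u$ with $a=R'(\sigma)/q$, where $\phi_q$ denotes the $q$-dilation $t\mapsto qt$. The coefficient $a$ lies in the $q$-difference field $K=\C(t,\sigma)$, which is $\phi_q$-stable because $\phi_q(\sigma)=R(\sigma)\in\C(\sigma)$. Moreover, $\sigma$ is differentially algebraic over $\C(t)$ if and only if $u$ is differentially algebraic over $K$, so the whole problem reduces to the parametrized Galois theory of the rank-one $q$-difference equation $\phi_q u=au$ over $K$.

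The core Galoisian step is to invoke the Hardouin--Singer criterion for rank-one $q$-difference equations: differential algebraicity of $u$ over $K$ forces the existence of a nonzero linear differential operator $L\in\C[\partial]$ and of an element $h$ in a suitable enlargement of $K$ such that
$$L\!\left(\frac{\partial a}{a}\right)=\phi_q(h)-h.$$
Substituting $a=R'(\sigma)/q$ and iterating $\phi_q(\sigma)=R(\sigma)$, this $\phi_q$-cohomological vanishing becomes a concrete differential-algebraic identity on $R$ and its iterates evaluated along the orbit of~$\sigma$, of a type that does not hold for generic~$R$.

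The last step is to interpret this rigidity dynamically. The telescoping identity above ultimately forces $R$ to commute with a nontrivial one-parameter analytic flow, and the rational maps with this property are, by classical results of Ritt, Julia and Fatou, the conjugates by a homography of a power map, a Chebyshev polynomial or a Lattès map. Their linearizing Poincaré functions are, respectively, $\exp(\a t^p)$, $\cos(\a t^p+\be)$, and the Weierstrass functions $\wp$, $\wp^2$, $\wp^3$, $\wp'$ composed with $\a t^p+\be$, which matches exactly the list of the theorem. The hardest part will be this translation step, in two respects: identifying the correct enlargement of $K$ in which the coboundary $h$ is allowed to live (small enough that the telescoping condition is genuinely restrictive, large enough that Hardouin--Singer applies), and matching the order of $L$ with the rank of the linearizing uniformization so as to distinguish between the exponential, trigonometric and elliptic cases.
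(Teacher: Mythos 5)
Your strategy is genuinely different from the paper's, and it has a gap precisely at the step that carries all the weight. The paper does not work with the $q$-dilation $\phi_q$ at all: it passes to the compositional inverse $\tau$ of $\sigma$, which satisfies the Schr\"oder equation $\tau(R(x))=q\tau(x)$, and takes as difference operator the substitution $\Phi_R:f\mapsto f\circ R$ acting on $\C((x))$. Over the base field $\C(x)$ (whose $\Phi_R$-constants are exactly $\C$), Fa\`a di Bruno's formula shows that $\tau,\tau',\dots,\tau^{(n)}$ satisfy an upper-triangular linear $\Phi_R$-system with coefficients in $\C(x)$, and \emph{ordinary} (non-parametrized) difference Galois theory --- only the description of the proper algebraic subgroups of $(\C^*)^2$ and of $(\C,+)$ --- yields that differential algebraicity forces a first-order equation $(\tau')^r=A(x)\tau^j$ with $r\neq 0$, hence $(\sigma')^{-r}=t^jA(\sigma)$ after inverting. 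In your setup the base field $K=\C(t,\sigma)$ contains the transcendental function $\sigma$, the hypotheses of Hardouin--Singer (differentially closed $\delta$-constants, or a descent argument) are not checked, and above all the sentence ``the telescoping identity ultimately forces $R$ to commute with a nontrivial one-parameter analytic flow'' is an assertion, not an argument: extracting a structural consequence about $R$ from the existence of $L$ and $h$ in an unspecified enlargement of $K$ is exactly where the difficulty lies, and your sketch gives no indication of how to do it.

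The endgame also diverges from the paper, and again skips the substantive step. The paper never classifies the rational maps $R$ (power maps, Chebyshev, Latt\`es); it works directly with $(\sigma')^r=t^jA(\sigma)$: a computation of orders at $0$ and at a nonzero zero $b$ of $\sigma$ produces $p\in\Q$ with $r(p-1)+jp=0$, the substitution $t=u^p$ makes the equation autonomous, $z'(u)^r=p^rA(z(u))$, and a monodromy argument (every branch of $z=\sigma(u^p)$ solves the same autonomous ODE and the same functional equation, hence is a translate $z(u+\widetilde u)$) shows that $z$ is periodic; one then quotes Ritt's 1922 classification of periodic functions with a multiplication theorem to obtain the explicit list. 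If you want to pursue your route you must actually supply (i) a proof that the parametrized Galois group of $\phi_q u=au$ over $K$ being proper yields a concrete, exploitable differential relation, and (ii) a proof of the dynamical rigidity you invoke; neither is routine, and as written the argument does not go through.
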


The list above appears in \cite{Ritt-PeriodicFncts},
where Ritt classifies periodic Poincaré functions. For more comments and explanations on such a list we refer to
\cite[\S3.1]{Fernandes-Dracula}.
{Notice that $\sigma$ is rational if and only if $R(t)$ is a homography (see Lemma~\ref{lemma:rationalsolutions} below).}
\par
The original proof of the theorem above is organised in the following way:
\begin{enumerate}
  \item Ritt supposes that $\sigma$ is solution of a general algebraic differential equation. Replacing $t$ by $qt$ and using
 ~\eqref{eq:MainFunctEq}, he derives a new differential equation that allows him to perform an Euclidean division in order to lower what he calls the
  \emph{rank} of the differential equation. He iterates this operation several times,
  choosing carefully the terms to eliminate in the process:
  this part of the proof (namely from \S4 to \S11 in \emph{loc.cit.}) is really a \emph{tour de force}.
  He narrows the investigation to three possible
  differential equations satisfied by $\sigma$, that are called $(A)$, $(B)$ and $(C)$ (see \S 11 in \emph{loc.cit.}), and he notices that
  the solutions of $(A)$ and $(B)$ are actually solutions of a differential equation of type $(C)$, so that he is left with this last case.
  \item He shows that the Poincaré functions that are also solutions of $(C)$,
  are obtained by composing a periodic Poincaré function with a rational power of $t$.
  \item He uses his results in \cite{Ritt-PeriodicFncts} on the classification of periodic Poincaré functions to make the list explicit and
  concludes a posterori that actually only the differential equation,
  that he calls $(A)$, can occur:
    $$
    (y')^r=t^jA(y),
    \hbox{~where $r,j\in\Z$ and $A(t)\in\C(t)$.}
    \leqno{(A)}
    $$
\end{enumerate}

Ritt's proof of Theorem~\ref{thm:Ritt} is extremely technical and it may be difficult to understand how
the differential equation $(A)$
is singled out at the very last line of the paper. We think that the Galoisian proof presented below,
which is much shorter modulo the Galois theory, may give a different insight on the existence of the differential equation $(A)$.
The price to pay for a theoretical understanding of the existence of the equation $(A)$ is the algorithmic nature of Ritt's proof.
{Another proof of Ritt theorem can be found in \cite{Casale,casale2015grupoide}, where the same three differential equations as in Ritt's paper appear.}

\bigskip
The purpose of this paper is to give a Galoisian proof of the following theorem,
from whom one can deduce Ritt's theorem (see \S\ref{sec:ritt} below):

\begin{thm}\label{thm:typeA}
{Let $\sigma\in\C[[t]]$ be a formal} solution of a functional equation of the form~\eqref{eq:MainFunctEq},
for some $R(t)\in\C(t)$ not a homography, such that $R(0)=0$, $R'(0)=q\neq 0$ not a root of unity.
The function $\sigma$ is differentially algebraic over $\C(t)$ if and only if
it is solution of a differential equation ``of Ritt type $(A)$'', with $r\neq 0$.
\end{thm}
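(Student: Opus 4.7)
The backward direction of Theorem~\ref{thm:typeA} is immediate: after clearing denominators, $(y')^r = t^j A(y)$ is already a non-trivial polynomial relation in $\C(t)[y,y']$, so any solution is differentially algebraic over $\C(t)$. All the content lies in the forward implication, and the plan is to convert the non-linear equation~\eqref{eq:MainFunctEq} into a \emph{rank-one} linear $q$-difference equation for $\sigma'$, and then apply the Galois theory of such equations (in particular Hardouin--Singer style criteria relating algebraicity of a solution to the triviality of a cocycle) to read off the precise shape of $\sigma'$.

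Let $\phi$ denote the shift $f(t)\mapsto f(qt)$, and set $K:=\C(t,\sigma)$; since $R$ is not a homography, Lemma~\ref{lemma:rationalsolutions} guarantees that $\sigma$ is transcendental over $\C(t)$, so that $K$ is a genuine field of rational functions in two algebraically independent generators, equipped with the $\phi$-action $\phi(t)=qt$, $\phi(\sigma)=R(\sigma)$. Differentiating $R(\sigma(t))=\sigma(qt)$ yields
\begin{equation*}
\frac{\phi(\sigma')}{\sigma'} \;=\; \frac{R'(\sigma)}{q},
\end{equation*}
a rank-one linear $\phi$-equation over $K$. The first step is to reduce differential algebraicity of $\sigma$ over $\C(t)$ to \emph{algebraicity} of $\sigma'$ over $K$: iterating~\eqref{eq:MainFunctEq} and differentiating expresses each $\sigma^{(k)}(q^{n}t)$ as a rational function of $\sigma,\sigma',\dots,\sigma^{(k)}$ over $\C(t)$ with a controlled $\phi$-transformation rule, so that any algebraic differential relation on $\sigma,\sigma',\dots,\sigma^{(n)}$ propagates along the $\phi$-orbit and, by a transcendence-degree count inside $\C(t,\sigma,\sigma',\dots)$, forces $\sigma'$ to be algebraic over $K$. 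Once this reduction is in hand, the rank-one $\phi$-difference Galois theory translates this algebraicity into the existence of an integer $r\geq 1$ and of $h\in K^\times$ with
\begin{equation*}
\left(\frac{R'(\sigma)}{q}\right)^{r} \;=\; \frac{\phi(h)}{h},
\end{equation*}
so that $(\sigma')^{r}/h$ is $\phi$-invariant; because $q$ is not a root of unity, the $\phi$-invariants of the ambient Picard--Vessiot extension reduce to $\C$, which gives $(\sigma')^{r}=c\,h$ for some $c\in\C^\times$.

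The last step, which I expect to be the main obstacle, is to bring $h$ into the Ritt shape $t^{j}A(\sigma)$. Since the cocycle $(R'(\sigma)/q)^{r}$ lies in $\C(\sigma)\subset K$, the factorisation of $h$ into ``$t$-divisors'' on $K=\C(\sigma)(t)$ must be compatible with the $\phi$-action $t\mapsto qt$; as $q$ is not a root of unity, the only horizontal $\phi$-orbits of finite length on the $t$-line over $\C(\sigma)$ are $\{0\}$ and $\{\infty\}$, so the horizontal contribution to $h$ is forced to be a power $t^{j}$. The delicate point is to rule out ``vertical'' contributions, that is, irreducible $P\in\C(\sigma)[t]$ of positive $t$-degree dividing $h$: these live in infinite $\phi$-orbits, and the triviality of the cocycle modulo each such orbit must eliminate them. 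What remains is $h = t^{j}A(\sigma)$ with $j\in\Z$ and $A\in\C(\sigma)^\times$, hence $(\sigma')^{r}=t^{j}A(\sigma)$ with $r\neq 0$, which is Ritt's type $(A)$ equation.
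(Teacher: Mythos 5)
Your outline is in the right spirit --- difference Galois theory applied to the linearised equation $\phi(\sigma')/\sigma'=R'(\sigma)/q$ --- but the two steps that carry all the weight are precisely the ones you leave unproved. First, the reduction from ``$\sigma$ differentially algebraic over $\C(t)$'' to ``$\sigma'$ algebraic over $K=\C(t,\sigma)$'' cannot be obtained by ``a transcendence-degree count'': differential algebraicity only tells you that $\sigma^{(n)}$ is algebraic over $\C(t,\sigma,\dots,\sigma^{(n-1)})$ for \emph{some} $n$, and nothing in the counting forces $n=1$. This descent is the heart of the paper's proof: working with the compositional inverse $\tau$ of $\sigma$ and the substitution operator $\Phi_R(f)=f\circ R$, one shows via Fa\`a di Bruno's formula that $\omega_n=\tau^{(n)}/\bigl((\tau')^n\tau^{1-n}\bigr)$ satisfies an \emph{inhomogeneous} rank-one equation $\Phi_R(\omega_n)=\omega_n+b'$ with $b'\in\K_n:=\K(\tau,\dots,\tau^{(n-1)})$, and then invokes the fact that $(C,+)$ has no proper non-trivial algebraic subgroup (Example~\ref{exa:Ga}) to conclude that $\omega_n\in\K_n$, contradicting the minimality of $n>1$. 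Some such unipotent argument is indispensable; you need to supply it.

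Second, your final step --- showing that the $h\in K^\times$ with $(\sigma')^r=ch$ has the form $t^jA(\sigma)$ --- is exactly the point you yourself flag as ``the main obstacle'', and the orbit analysis you sketch is more delicate than you suggest: $\phi$ does not fix $\C(\sigma)$ pointwise (it sends $\sigma\mapsto R(\sigma)$), so classifying the finite $\phi$-orbits on the $t$-line over $\overline{\C(\sigma)}$ is not simply a matter of observing that $q$ is not a root of unity. The paper sidesteps this entirely by choosing the \emph{small} base field $\C(\sigma)$ (equivalently $\C(x)$ via $x=\sigma(t)$) and the rank-two diagonal system satisfied by $(\tau,\tau')=(t,1/\sigma')$: its Galois group sits inside $(C^\ast)^2$, every proper algebraic subgroup of $(C^\ast)^2$ is cut out by a monomial character, and the Galois correspondence then yields directly $\tau^{\a_1}(\tau')^{\a_2}\in\C(x)$, i.e.\ $t^{\a_1}(\sigma')^{-\a_2}\in\C(\sigma)$, which is Ritt's type $(A)$ with no divisor analysis at all (the degenerate case $\a_2=0$ being handled by a logarithmic derivative). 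I would recommend either adopting that rank-two formulation over $\C(\sigma)$, or, if you keep the rank-one equation over $\C(t,\sigma)$, actually carrying out the two missing steps; as written the proposal is a plan rather than a proof.
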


We close the paper by proving a generalisation of the last statement to the case of
a series $R(t)\in\C[[t]]$ algebraic over $\C(t)$, such that $R(0)=0$ and $R'(0)=q$, with $q\in\C$, $q\neq 0$ not a root of unity.

\medskip
The paper is organized as follows. In \S\ref{sec:ritt} we show how to deduce Theorem~\ref{thm:Ritt} from Theorem~\ref{thm:typeA}.
In \S\ref{sec:Galois} we recall some basic facts of difference Galois theory,
that are used in \S\ref{sec:typeA} to prove Theorem~\ref{thm:typeA}.
Finally, in \S\ref{sec:algebraicR}, we generalize Theorem~\ref{thm:typeA} to the case of an algebraic function $R$.

\paragraph*{Acknowledgement.} It is a pleasure to thank the participants of the \emph{Groupe de travail sur les marches dans le quart de plan},
where both authors have
given several talks on Ritt's theorem and on some subsequent works in differential algebra. We would like to thank Guy Casale and Federico Pellarin for their interest
for the present work and Alin Bostan
for his attentive reading of the manuscript and his remarks that have allowed to improve a previous version of our result.

\section{How to deduce Theorem~\ref{thm:Ritt} from Theorem~\ref{thm:typeA}}
\label{sec:ritt}

The proof below follows the main ideas of Ritt (see \cite[\S12]{ritt1926transcendental}), but it is easier than the
original proof, since we are left from the beginning
with the simplest differential equation among the three found by Ritt.
For the reader convenience we give all the details, even the proofs of the lemmas below that are already in the original reference, and are quite classical.

\smallskip
Let $R(t)\in\C(t)$ be such that $R(0)=0$ and $R'(0)=q\in \C$, with $|q|>1$.
We consider the only solution $\sigma\in t+t^2\C[[t]]$ of the functional equation~\eqref{eq:MainFunctEq}, namely:
\[
\sigma(qt)=R(\sigma(t)).
\]
As we have already pointed out in the introduction, $\sigma$ is the expansion at zero of a uniform meromorphic function over the whole $\C$.

\begin{lemma}[{\cite[\S1]{ritt1926transcendental}}]
\label{lemma:rationalsolutions}
The following assertions are equivalent:
\begin{enumerate}
    \item The Poincaré function $\sigma$ is rational.
    \item $\sigma$ is a homography.
    \item $R(t)$ is a homography.
\end{enumerate}
\end{lemma}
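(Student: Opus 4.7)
\medskip

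The implication (2) $\Rightarrow$ (1) is obvious since a homography is a rational function. The plan is to close the loop by proving (1) $\Rightarrow$ (3) and (3) $\Rightarrow$ (2).

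For (1) $\Rightarrow$ (3), I would use the multiplicativity of the degree of a rational function under composition. Suppose $\sigma\in\C(t)$, and let $d=\deg\sigma\geq 1$ (it is $\geq 1$ because $\sigma\in t+t^2\C[[t]]$ is non-constant). Substituting $t\mapsto qt$ does not change the degree, so $\deg(\sigma(qt))=d$. On the other hand, for $R\in\C(t)$ of degree $e$, one has $\deg(R(\sigma(t)))=e\cdot d$. The functional equation~\eqref{eq:MainFunctEq} then forces $ed=d$, hence $e=1$, so that $R$ is a homography.

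For (3) $\Rightarrow$ (2), I would solve~\eqref{eq:MainFunctEq} explicitly within the family of homographies fixing $0$ and use uniqueness. Since $R$ is a homography with $R(0)=0$ and $R'(0)=q$, it can be written in the form $R(t)=qt/(1+ct)$ for some $c\in\C$. Looking for a solution of the form $\sigma(t)=t/(1+bt)$, a direct computation gives
\[
\sigma(qt)=\frac{qt}{1+bqt},\qquad R(\sigma(t))=\frac{qt}{1+(b+c)t},
\]
so that~\eqref{eq:MainFunctEq} is equivalent to the linear equation $b(q-1)=c$, which admits the unique solution $b=c/(q-1)$ because $|q|>1$ in particular implies $q\neq 1$. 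This produces a homography $\sigma_0\in t+t^2\C[[t]]$ satisfying~\eqref{eq:MainFunctEq}. Since the coefficients of any solution in $t+t^2\C[[t]]$ are determined recursively by the equations $a_n(q^n-q)=(\text{polynomial in }a_2,\dots,a_{n-1})$, and since $q^n-q\neq 0$ for $n\geq 2$ (as $|q|>1$), the formal solution of~\eqref{eq:MainFunctEq} in $t+t^2\C[[t]]$ is unique. Therefore $\sigma=\sigma_0$ is a homography.

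The argument is essentially elementary: the only mild point is ensuring the uniqueness of the formal solution in the last step, which however is immediate from $|q|>1$. No real obstacle arises; the main thing to keep in mind is the standard fact that the degree of a rational function is multiplicative under composition, used in (1) $\Rightarrow$ (3).
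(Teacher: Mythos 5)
Your proof is correct, but the key implication $(1)\Rightarrow(3)$ is argued quite differently from the paper. The paper (following Ritt) shows that if the numerator of $R$, in lowest terms, had degree $\geq 2$, then $R$ would have a second finite zero $a_1\neq 0$, and pulling this back through a rational $\sigma$ produces a point $b$ with $\sigma(q^nb)=0$ for all $n$, i.e.\ infinitely many zeros of $\sigma$ --- a contradiction; the denominator is then handled by running the same argument on $R\circ R$. You instead invoke the multiplicativity of the degree of rational maps of $\mathbb{P}^1$ under composition: $\deg\sigma=\deg(\sigma(qt))=\deg(R\circ\sigma)=\deg R\cdot\deg\sigma$ forces $\deg R=1$. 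This is shorter and cleaner, and it handles numerator and denominator in one stroke. What the paper's longer route buys is a byproduct it reuses immediately: the zero-counting argument shows that when $R$ is \emph{not} a homography, $\sigma$ has infinitely many zeros accumulating at $\infty$, which is exactly what the subsequent corollary needs to conclude that $\sigma$ has an essential singularity at $\infty$ and is transcendental. Your degree argument does not yield that information. For $(3)\Rightarrow(2)$ your computation matches the paper's: with $R(t)=qt/(1+ct)$ the paper simply exhibits $\sigma(t)=(q-1)t/(ct+(q-1))$, which is your $\sigma_0$; your explicit verification of the uniqueness of the formal solution in $t+t^2\C[[t]]$ via the recursion $a_n(q^n-q)=(\cdots)$ is a point the paper leaves implicit in its definition of $\sigma$ as \emph{the} solution, so spelling it out is harmless and arguably welcome.
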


\begin{proof}
If $R(t)$ is a homography, it must have the form $R(t)=\frac{qt}{at+1}$, for some $a\in\C$.
Then $\sigma(t)=\frac{(q-1)t}{at+(q-1)}$.
We conclude proving by contradiction that, if $\sigma$ is rational,
then $R(t)$ is a homography. First, notice that Equation \eqref{eq:MainFunctEq}
implies that:
\begin{equation}
\label{eq:schroder_R2}
    \sigma(q^2t)=R(R(\sigma(t))).
\end{equation}
We suppose that $\sigma\in\C(t)$ and assume that the degree of the numerator of $R$, {after eliminating any common factor with the denominator, is at least $2$.}
In this case, $R(t)$ has at least two finite zeros, namely
$0$, which is simple zero by assumption, and $a_1\neq 0$.
The rationality of $R$ implies that there exists
$a_2$ such that $R(a_2)=a_1$. Note that $a_2\notin\{0,a_1\}$
and $R(R(a_2))=0$.
The rationality of $\sigma$ implies that there exists $b_i$ such that
$\sigma(b_i)=a_i$, for $i\in\{1,2\}$.
Since $b_1\neq b_2$, $b_1$ and $b_2$ cannot be both equal to $\infty$.
If $b_1$ is finite, we deduce from~\eqref{eq:MainFunctEq} that
$\sigma(qb_1)=R(\sigma(b_1))=0$. Recursively we obtain that $\sigma(q^nb_1)=0$
for any positive integer $n$, and hence that $\sigma$ has an infinite number of zeros,
which contradicts its rationality. If $b_2$ is finite, we use \eqref{eq:schroder_R2} to conclude that $\sigma(q^{2n}b_2)=0$
for any positive integer $n$, and hence that $\sigma$ has an infinite number of zeros,
which contradicts its rationality.
Therefore the numerator of $R$ cannot have degree greater than 1, and it is equal to $qt$.
If the denominator of $R(t)$ is a polynomial of degree at least $2$, then
the numerator of $R(R(t))$ has degree at least 2. Applying the previous
reasoning to $R\circ R$ and Equation \eqref{eq:schroder_R2}, we obtain a contradiction.
So both the numerator and the denominator of $R$ have degree at most $1$, which means that $R$ is a homography.
\end{proof}

\begin{cor}[{\cite[\S1 to \S3]{ritt1926transcendental}}]
If $R(t)$ is not a homography, then:
\begin{enumerate}
    \item $\sigma$ has an infinite set of zeros and an essential singularity at $\infty$.
    \item $\sigma$ is transcendental.
\end{enumerate}
\end{cor}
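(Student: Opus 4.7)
The plan is to derive both assertions from Lemma~\ref{lemma:rationalsolutions} together with classical properties of transcendental meromorphic functions on $\C$.

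For (2), Lemma~\ref{lemma:rationalsolutions} gives $\sigma\notin\C(t)$. Any single-valued meromorphic function on the simply connected plane $\C$ that is algebraic over $\C(t)$ must in fact lie in $\C(t)$, so $\sigma$ is transcendental over $\C(t)$. The same observation yields the essential-singularity half of (1): a non-rational meromorphic function on $\C$ cannot extend meromorphically to $\hat\C$, so $\infty$ is an essential singularity of $\sigma$.

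For the existence of infinitely many zeros, iterating \eqref{eq:MainFunctEq} gives $\sigma(q^n t) = R^n(\sigma(t))$. Hence to exhibit an infinite family of zeros it suffices to produce one $b \in \C \setminus \{0\}$ and some $n \geq 1$ with $R^n(\sigma(b)) = 0$: for such $b$, the point $q^n b$ is a nonzero zero of $\sigma$, and $\{q^{n+k}b : k \geq 0\}$ is an infinite set of distinct zeros since $|q|>1$. I would find such a $b$ by combining two inputs. On the dynamical side, a short case split on the form of $R$---either its numerator in lowest terms has a root $a_1\neq 0$, or $R=qt/d(t)$ with $\deg d\geq 2$, in which case $R(\infty)=0$ and the nonzero zeros of $d$ feed fresh preimages of $0$ into the iteration---shows that the backward orbit $\bigcup_{n\geq 0} R^{-n}(0) \subseteq \hat\C$ is infinite. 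On the analytic side, Picard's great theorem at the essential singularity $\infty$ ensures that $\sigma$ omits at most two values of $\hat\C$. Pigeonholing, one picks $v \in \bigcup_n R^{-n}(0) \setminus \{0\}$ not in the omitted set of $\sigma$, and then any preimage $b \in \C$ of $v$ under $\sigma$ supplies the desired zero (with $b\neq 0$ because $\sigma(0)=0\neq v$).

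The only nonroutine point is the dynamical claim that the backward orbit $\bigcup_n R^{-n}(0)$ is infinite; once this is dispatched by the case split above, the rest is a clean application of the iterated functional equation together with Picard's theorem.
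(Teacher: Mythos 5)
Your proof is correct and, at bottom, runs on the same mechanism as the paper's: pull the value $0$ back along the backward orbit $\bigcup_n R^{-n}(0)$, transport it to zeros of $\sigma$ via the iterated functional equation $\sigma(q^nt)=R^{\circ n}(\sigma(t))$, and observe that the resulting zeros $q^{n+k}b$ accumulate at $\infty$. The differences are in the logical order and in one substantive justification, and both are in your favour. The paper deduces the essential singularity and the transcendence \emph{from} the infinitude of zeros, and for the zeros it simply says ``as in the proof of the previous lemma''; but in Lemma~\ref{lemma:rationalsolutions} the existence of points $b_i$ with $\sigma(b_i)=a_i$ came from the surjectivity of a rational map onto $\C\cup\{\infty\}$ --- precisely the hypothesis that is unavailable once $\sigma$ is known not to be rational. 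You close that gap explicitly: you first obtain transcendence and the essential singularity directly from $\sigma\notin\C(t)$ (a single-valued meromorphic function on $\C$ that is algebraic over $\C(t)$ has a Laurent expansion at $\infty$ by Puiseux, hence is rational), and you then invoke Picard to guarantee that $\sigma$ attains at least one of the infinitely many finite nonzero points of the backward orbit. In fact the little Picard theorem for meromorphic functions (a non-constant meromorphic function on $\C$ omits at most two values of $\C\cup\{\infty\}$) already suffices here, which sidesteps any discussion of whether $\infty$ is an isolated singularity when the poles of $\sigma$ accumulate there.

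The one step you should write out in full is the infinitude of $\bigcup_n R^{-n}(0)$: your case split produces a nonzero element of $R^{-1}(0)$, or of $R^{-2}(0)$ via $\infty$, but not yet infinitely many elements. The missing line is short. Build a chain $a_0=0$, $R(a_{k+1})=a_k$, starting it through the nonzero point supplied by your case split (each $a_k$ has a preimage because a non-constant rational map is surjective on $\C\cup\{\infty\}$); an easy induction from $a_1\neq 0$ (or $a_1=\infty$, $a_2\neq 0$) gives $a_j\neq 0$ for all $j\geq 1$, and if $a_{n+1}=a_k$ with $0\le k\le n$, then applying $R$ repeatedly yields $a_{n+1-k}=a_0=0$, a contradiction; hence the $a_k$ are pairwise distinct. (Alternatively, if the backward orbit were finite it would be a finite completely invariant set for the degree $\geq 2$ map $R$, forcing $0$ to be a critical point of maximal multiplicity, which contradicts $R'(0)=q\neq 0$.) With that line added, your argument is complete.
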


\begin{proof}
Let us suppose that the numerator and the denominator of $R(t)$ do not have
common factors.
If $R$ is not a homography, then either the numerator or the denominator of $R$ has degree at least $2$.
As in the proof of the previous lemma, we conclude that
$\sigma$
has an infinite set of zeros, that accumulate at $\infty$. Since
$\sigma$ is not identically zero, it must have an essential singularity
at $\infty$. This also implies that $\sigma$ is transcendental.
\end{proof}

\begin{proof}[Proof of Theorem~\ref{thm:Ritt}]
{By Theorem \ref{thm:typeA}}, we have $(\sigma')^r=t^jA(\sigma)$, for some $r,j\in\Z$,
with $r\neq 0$, and $A(t)\in\C(t)$, and hence:
\[
0=\ord_{t=0}(\sigma')^r=j+\ord_{t=0}A(t)\,\ord_{t=0}\sigma(t).
\]
We conclude that $\ord_{t=0}A(t)=-j$.
We know from the previous corollary that $\sigma$ has a finite zero $b\neq 0$.
Let $p:=\ord_{t=b}\sigma(t)\geq 1$.
We observe that:
\[
r\,\ord_{t=b}\sigma'(t)=\ord_{t=0}A(t)\,\ord_{t=b}\sigma(t)=-jp,
\]
hence $r(p-1)+jp=0$. We consider the change of variable $t=u^p$ and we set
$z(u):=\sigma(u^p)$. A direct calculation shows that:
    \begin{equation}\label{eq:monodromy}
    z(q^{1/p}u)=R(z(u))
    \hbox{~and~}
    z'(u)^r=p^rA(z(u)).
    \end{equation}
Notice that for any non-zero constant $\wtilde u$, the function
$z(u+\wtilde u)$ is also a solution of the differential equation above.
Since $\sigma$ has an essential singularity at $\infty$,
then $\sigma$ takes all the values in $\C$, with at most one exception.
The same holds for $z(u)$. Therefore the unique solution of the
differential equation $y'(u)^r=p^rA(y(u))$ with initial condition
$y(u_0)=c$, with $u_0,c\in\C$, is constructed in the following way: we find $u_c$
such that $z(u_c)=c$ and we chose the solution $z(u-u_0+u_c)$ of the
differential equation above.
Given the freedom in the choice of both $u_0$ and $c$,
one avoids the missing value of $z(u)$ and concludes that all solutions of  $y'(u)^r=p^rA(y(u))$ are obtained composing
$z(u)$ with a translation.
\par
By construction, the solution $z(u)$ has a non-trivial monodromy.
Let $\wtilde z(u)$ be another branch of $z(u)$.
We observe that $\wtilde z(u)$ is solution of the
system~\eqref{eq:monodromy}, therefore $\wtilde{z}(u)=z(u+\wtilde u)$, for some
non-zero $\wtilde u\in\C$.
The uniqueness of the analytic continuation implies that
$\wtilde z(q^{1/p}u)=R(\wtilde z(u))$ and hence that:
\[
z(q^{1/p}u+\wtilde u)=\wtilde z(q^{1/p}u)=R(\wtilde z(u))
=R(z(u+\wtilde u))
=z(q^{1/p}(u+\wtilde u)).
\]
We deduce that $z$ is periodic of period $(q^{1/p}-1)\wtilde u$.
We conclude as Ritt does, using his result \cite{Ritt-PeriodicFncts}
on the classification of periodic Poincaré functions.
\end{proof}

\section{Elements of Galois theory of difference equations}
\label{sec:Galois}

For an introduction to the Galois theory of difference equations, we refer the reader to \cite{vdPutSingerDifference}, or to
\cite[\S 2]{OvchinnikovWibmer},
where the authors make very general assumptions.
The setting considered here (as well as the notation) matches the approach
developed in \cite[\S2 to \S5]{divizio-hanoi}, so we are going to
refer to it. The only difference with \cite{divizio-hanoi} is that our field of
constants $C$ is algebraically closed. For this reason we can naively
identify the Galois groups with their $C$-points, which makes things slightly easier.
We remind below the notions that are essential to the understanding of the proof in the next section.

\bigskip
Let $\F/\K$ be a field extension, such that $\F$ comes equipped with an endomorphism $\Phi:\F\to\F$, which induces a
{non-periodic} endomorphism of $\K$.
{It means that there exists $x\in\K$ such that $\Phi^n(x)\neq x$, for any
{non-zero} integer $n$.}
We suppose that the field $C$ of elements of $\F$ that are left invariant by $\Phi$ is algebraically closed and that
$C\subset \K$.
We consider the linear system of the form
\begin{equation}\label{eq:system}
\Phi\vec y=\begin{pmatrix}
a_1 & b\\ 0 & a_2
\end{pmatrix}\vec y,
\end{equation}
where $a_1,a_2,b\in\K$, with $a_1a_2\neq 0$, and
we suppose that there exists an invertible matrix
$\begin{pmatrix}
z_1 & w\\ 0 & z_2
\end{pmatrix}\in\GL_2(\F)$ that satisfies~\eqref{eq:system}.

\begin{defn}[{see \cite[Def.~3.5]{divizio-hanoi}}]
We call Picard-Vessiot ring of~\eqref{eq:system} over
$\K$ the ring $\cR=\K[z_1^{\pm 1},z_2^{\pm 1},w]\subset\F$.
We define the Galois group of~\eqref{eq:system} to be:
\begin{equation}\label{eq:GaloisGroup}
    G:=\{\varphi:\cR\to \cR,
    \hbox{~automorphism of $\K$-algebras, such that~}\varphi\circ\Phi=\Phi\circ\varphi\}.
\end{equation}
The elements of $G$ extend to automorphisms of the field of fractions $\cL$ of $\cR$.
\end{defn}

The system~\eqref{eq:system} boils down to the equations $\Phi(z_i)=a_iz_i$, for $i=1,2$, and $\Phi(w)=a_1w+bz_2$.
Any
$\varphi\in G$, being a ring automorphism, must leave globally invariant the space of solutions of \eqref{eq:system}.
Therefore there exists non-zero $c_i\in C$ such that
$\varphi(z_i)=c_iz_i$, for $i=1,2$. As far as $\varphi(w)$ is concerned,
it must be a solution of $\Phi(y)=a_1 y+b\varphi(z_2)=a_1 y+b c_2 z_2$, hence there exists $d\in C$ such that
$\varphi(w)=dz_1+c_2w$.
We conclude that
\[
\varphi\begin{pmatrix}
z_1 & w\\ 0 & z_2
\end{pmatrix}=\begin{pmatrix}
z_1 & w\\ 0 & z_2
\end{pmatrix}\begin{pmatrix}
c_1 & d\\ 0 & c_2
\end{pmatrix}
=\begin{pmatrix}
c_1z_1 & dz_1+c_2w\\ 0 & c_2z_2
\end{pmatrix},
\]
for some $\begin{pmatrix}
c_1 & d\\ 0 & c_2
\end{pmatrix}
\in\GL_2(C)$.

\bigskip
We now state the main properties of the Galois group of a functional equation:

\begin{thm}[{\cite[Thm.~4.9 and 5.3]{divizio-hanoi}}]
\begin{enumerate}
    \item The Galois group $G$ is an algebraic subgroup of
    $\GL_2(C)$, and its dimension as an algebraic variety over $C$ is equal to the transcendence degree of $\cL/\K$.
    \item $\K=\{f\in \cL: \varphi(f)=f~\forall \varphi\in G\}$.
\end{enumerate}
\end{thm}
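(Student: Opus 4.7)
The plan is to follow the standard torsor argument of Picard--Vessiot theory, adapted to the difference setting as in \cite{vdPutSingerDifference}. The first move is to build a candidate Hopf algebra for $G$. On the tensor product $\cR\otimes_\K\cR$, equipped with the diagonal endomorphism $\Phi\otimes\Phi$, both
$Z:=\bigl(\begin{smallmatrix}z_1 & w\\ 0 & z_2\end{smallmatrix}\bigr)\otimes 1$ and $Z':=1\otimes\bigl(\begin{smallmatrix}z_1 & w\\ 0 & z_2\end{smallmatrix}\bigr)$ are fundamental solution matrices of the system, so the entries of $X:=Z^{-1}Z'$ lie in the subring of $\Phi\otimes\Phi$-invariants. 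Upper-triangularity forces $X=\bigl(\begin{smallmatrix}c_1 & d\\ 0 & c_2\end{smallmatrix}\bigr)$ with $c_1,c_2$ invertible, and I would set $H$ to be the $C$-subalgebra of $\cR\otimes_\K\cR$ generated by the entries of $X$ and $X^{-1}$. Standard Hopf algebra structures on the upper-triangular $\GL_2$ (coproduct coming from multiplying a second copy of $X$) then make $H$ a quotient of $C[c_1^{\pm1},c_2^{\pm1},d]$ by a Hopf ideal carved out by the specific algebraic relations forced by $\cR$.

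The second and most crucial step is to establish the torsor isomorphism
\[
\mu:\cR\otimes_C H\;\xrightarrow{\ \sim\ }\;\cR\otimes_\K\cR,
\]
sending a constant $h$ to its image in $\cR\otimes_\K\cR$, and to identify $G$ with $\mathrm{Spec}(H)(C)$. Surjectivity comes from $Z'=Z\cdot X$, so the second copy of the generators lies in the image. Injectivity requires knowing that the ring of $\Phi$-constants of $\cR$ (and of a suitable localisation) is precisely $C$. This is the content of the hypothesis that $\Phi$ is non-periodic on $\K$ and that $C$ is algebraically closed: one shows that $\cR$ decomposes as a finite product of integral domains permuted transitively by $\Phi$, each with constant field an algebraic extension of $C$, hence $C$ itself. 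Once $\mu$ is an isomorphism, a $\K$-algebra automorphism $\varphi$ of $\cR$ commuting with $\Phi$ is equivalent to a $\K$-algebra map $\cR\to\cR\otimes_C H$, i.e. a $C$-point of $H$, via $\varphi\mapsto Z^{-1}\varphi(Z)$; this is exactly the matrix parametrisation computed in the excerpt.

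Both statements of the theorem then fall out of $\mu$. For (1), extend scalars along $\cR\hookrightarrow\cL$ to get $\cL\otimes_C H\cong\cL\otimes_\K\cR$; the Krull dimension of the right-hand side over $\cL$ is $\mathrm{trdeg}(\cL/\K)$, while that of the left-hand side is $\dim H=\dim G$. For (2), suppose $f\in\cL$ is fixed by every $\varphi\in G$. Writing $f=a/b$ with $a,b\in\cR$ and chasing through $\mu$, the invariance translates into $f\otimes 1=1\otimes f$ in $\cL\otimes_\K\cL$ (since the corresponding element of $\cL\otimes_C H$ is $\cL$-rational); faithfully flat descent for the extension $\K\hookrightarrow\cL$ then gives $f\in\K$.

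I expect the main obstacle to be the identification $\cR^{\Phi}=C$ underlying the torsor isomorphism. Everything else is essentially formal manipulation of Hopf algebras and matrix coefficients once that constant-field statement is in place; but the latter demands a genuine structural analysis of $\cR$ as a $\Phi$-simple algebra, and it is there that the assumption of non-periodicity of $\Phi$ on $\K$ and the algebraic closedness of $C$ are both used in an essential way.
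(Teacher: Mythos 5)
You should first note that the paper does not prove this statement at all: it is imported verbatim from \cite{divizio-hanoi} (Thm.~4.9 and 5.3), and the authors use it as a black box. So there is no in-paper proof to compare with; what you have written is a reconstruction of the standard torsor argument of difference Picard--Vessiot theory, as in \cite{vdPutSingerDifference}. As an outline it is correct and it is essentially the argument the cited references use: form $X=Z^{-1}Z'$ in $\cR\otimes_\K\cR$, observe its entries are $\Phi\otimes\Phi$-invariant, let $H$ be the algebra they generate, prove $\cR\otimes_C H\cong\cR\otimes_\K\cR$, read off the group as $\mathrm{Spec}(H)(C)$, get the dimension count from Krull dimensions on both sides of the torsor isomorphism, and get the fixed-field statement from $f\otimes 1=1\otimes f$ plus descent along $\K\hookrightarrow\cL$.

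Two points deserve adjustment. First, your appeal to ``$\cR$ decomposes as a finite product of integral domains permuted transitively by $\Phi$'' is the structure theory of \emph{abstract} Picard--Vessiot rings; in the setting of this paper it is vacuous, because $\cR=\K[z_1^{\pm1},z_2^{\pm1},w]$ is by definition a subring of the field $\F$, hence a domain. The real content hiding behind your ``main obstacle'' is different: one must show that this concretely embedded $\cR$ is $\Phi$-simple, and the input for that is precisely that $\cL^{\Phi}\subseteq\F^{\Phi}=C$ with $C$ algebraically closed (this is the analogue of \cite[Prop.~1.23]{vdPutSingerDifference}, and is where the non-periodicity of $\Phi$ on $\K$ enters). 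Without $\Phi$-simplicity neither the injectivity of $\mu$ nor the identification $(\cR\otimes_\K\cR)^{\Phi\otimes\Phi}=H$ goes through, so this is genuinely the load-bearing step and your sketch leaves it as a gesture. Second, in part (2) the passage from ``$f$ is fixed by every $\varphi\in G$'' to ``the coaction on $f$ is trivial, hence $f\otimes1=1\otimes f$'' uses that the $C$-points of $H$ are Zariski-dense in $\mathrm{Spec}(H)$, i.e.\ that $H$ is reduced; this is automatic here (Cartier, characteristic zero, $C$ algebraically closed) but should be said. With those two points made precise, your argument is a faithful account of the proof the paper delegates to \cite{divizio-hanoi}.
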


\begin{exa}\label{exa:Gm2}
Let us consider a special case of the system~\eqref{eq:system}, with $b=0$ and, therefore, $w=0$.
In this case the Picard-Vessiot ring coincides with $\cR:=\K[z_1^{\pm 1},z_2^{\pm 1}]$ and
its Galois group $G$ is a subgroup of the group of the invertible diagonal matrices of rank $2$,
that we can naively identify with $(C^\ast)^2$.
It means that for any automorphism $\varphi\in G$ there exist two non-zero constants
$c_1,c_2\in C$ such that $\varphi(z_i)=c_iz_i$, for $i=1,2$.
\par
The solutions $z_1,z_2$ are algebraically dependent over $\K$ if and only if $G$ has dimensions $0$ or $1$, or equivalently if it
is a proper algebraic subgroup of  $(C^\ast)^2$.
We know that the proper algebraic subgroups of  $(C^\ast)^2$ are defined
by equations of the form $X_1^{\a_1}X_2^{\a_2}=1$, for some
$\a_1,\a_2\in\Z^2\smallsetminus\{(0,0)\}$.
It means that $c_1^{\a_1}c_2^{\a_2}=1$.
We conclude from the theorem above that $z_1^{\a_1}z_2^{\a_2}\in\F$
is invariant under any automorphism of the Galois group $G$ and hence that
$z_1^{\a_1}z_2^{\a_2}\in\K$.
\end{exa}

\begin{exa}\label{exa:Ga}
If in~\eqref{eq:system} we set $a_1=a_2=1$, then we can take $z_1=z_2=1$. The Picard-Vessiot ring boils down to
$\cR=\K[w]$ and the Galois group $G$ is a subgroup of
the group of matrices $\l\{\begin{pmatrix}1& d\\0& 1\end{pmatrix}, d\in C\r\}$,
that we can identify to $(C,+)$.
It means that for every $\varphi\in G$, there exists a constant $d\in C$ such that $\varphi(w)=w+d$.
\par
The solution $w$ is algebraically dependent
over $\K$ if and only if $G$ is a proper algebraic subgroup of $(C,+)$,
but the only proper algebraic subgroup of $(C,+)$ is the trivial group $0$.
Hence, if $w$ is algebraic over $\K$, then $d=0$ and $w$
is tautologically left fixed by all morphisms of $G$.
We conclude that $w\in\K$.
\end{exa}

\section{Proof of Theorem~\ref{thm:typeA}}
\label{sec:typeA}

Let $C$ be an algebraically closed field and $\F:=C((x))$.
We fix a formal power series $R\in\F$ such that $R$ is not a homography, {$R(0)=0$, $R'(0)=q\neq 0$ is not a root of unity}, so that
we can define the morphism:
$$
\begin{array}{cccc}
  \Phi_R: & \F & \to & \F, \\
   & f & \mapsto & f(R(x)).
\end{array}
$$
Notice that $\Phi_R$ is an automorphism of $\F$.
Moreover, $\Phi_R$ is not periodic.

\begin{lemma}\label{lemma:constants}
The field of constants $\F^{\Phi_R}:=\{f\in\F:\Phi_R(f)=f\}$ of $\F$
with respect to $\Phi_R$ coincides with $C$.
\end{lemma}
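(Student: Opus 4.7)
The plan is to expand any fixed element $f\in\F^{\Phi_R}$ as a formal Laurent series in $x$ and compare the leading terms of $f$ and $\Phi_R(f)=f\circ R$. The crucial observation is that since $R(x)=qx+O(x^2)$ has $x$-adic valuation equal to $1$, for every integer $n$ (positive, negative, or zero) we have $R(x)^n=q^n x^n+O(x^{n+1})$, because $R$ is invertible in $C((x))$ (its formal inverse has order $1$ as well, since $q\neq 0$). Consequently, substituting $R$ into a Laurent series multiplies the coefficient of $x^n$ by $q^n$, modulo strictly higher-order terms.

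First I would reduce to the case of a series with vanishing constant term. Letting $c\in C$ be the coefficient of $x^0$ in $f$ and replacing $f$ by $f-c$, which still lies in $\F^{\Phi_R}$ because $c$ is trivially fixed by $\Phi_R$, I may assume that the $x^0$-coefficient of $f$ is zero. Suppose for contradiction that $f\neq 0$, and let $n_0\in\Z\setminus\{0\}$ denote the $x$-adic valuation of $f$, with leading coefficient $a_{n_0}\neq 0$. The leading-term computation above shows that the coefficient of $x^{n_0}$ in $\Phi_R(f)$ equals $a_{n_0}q^{n_0}$, whereas in $f$ it equals $a_{n_0}$. The invariance equation $\Phi_R(f)=f$ therefore forces $q^{n_0}=1$, contradicting the hypothesis that $q$ is not a root of unity. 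Hence the original $f$ equals $c\in C$, proving $\F^{\Phi_R}\subset C$; the reverse inclusion is immediate.

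I do not foresee any serious obstacle here: the whole argument reduces to a one-step leading-term analysis, powered by the fact that $q^{n}\neq 1$ for every nonzero integer $n$. The only technical point worth checking, really a sanity check, is that $\Phi_R$ is well-defined on all of $C((x))$ rather than merely on $C[[x]]$; this is ensured by $R$ having $x$-adic valuation exactly $1$, which makes composition with $R$ legitimate for Laurent series and allows the treatment of positive and negative valuations by a single uniform computation.
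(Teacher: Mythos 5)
Your proof is correct and takes essentially the same approach as the paper: subtract the constant term and compare leading coefficients, using that $q^{n}\neq 1$ for $n\neq 0$. The only (cosmetic) difference is that the paper first replaces $f$ by its Cauchy inverse to reduce to positive valuation, whereas you handle negative valuations directly via $R(x)^{n}=q^{n}x^{n}+O(x^{n+1})$ for all $n\in\Z$; both come down to the same one-step leading-term computation.
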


\begin{proof}
Let $f\in \F\smallsetminus C$ be such that $f(R(x))=f(x)$.
Replacing $f$ with $f-f(0)$, we can suppose that $f$ has no constant term and,
replacing $f$
with its inverse for the Cauchy product, we can suppose that
$f=\sum_{n\geq N}f_nx^n$, for some positive integer $N$ that we chose so that $f_N\neq 0$.
{The coefficient of $x^N$ in $f(R(x))$ is $f_Nq^N$.
We deduce from $f(R(x))=f(x)$ that $f_Nq^N=f_N$, with $q^N\neq 0,1$, and hence that $f_N=0$, against
our assumption.} We conclude that, if $f$ has no constant term, then $f=0$.
This means exactly that $\F^{\Phi_R}=C$.
\end{proof}

Let $\K:=\C(x)$ be the field of rational functions.
We assume that $R$ is (the expansion of) a rational function.
In this case $\Phi_R$ an endomorphism of $\K$ and
we consider the functional equation:
\begin{equation}\label{eq:Konig}
\Phi_R(y_0(x))=qy_0(x).
\end{equation}
We use the notation $y_0$ for the unknown function for reasons that will be clearer in a few lines. The equation above has a formal solution $\tau\in\F$. It satisfies necessarily $\tau(0)=0$. Notice that $\tau$ is determined up to a multiplicative constant.
We have:

\begin{thm}\label{thm: main}
Let $\tau\in\F$ be a formal solution of \eqref{eq:Konig}.
The following assertions are equivalent:
\begin{enumerate}
    \item $\tau$ is differentially algebraic over $C(x)$.
    \item $\tau$ satisfies a differential equation with coefficients in $C(x)$ of order $1$.
    \item $\tau$ is solution of the differential equation
    $(y')^r=A(x)y^j$, for some $(r,j)\in\Z^2$, with $r\neq 0$,
    and $A(x)\in C(x)$.
\end{enumerate}
\end{thm}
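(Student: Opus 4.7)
The implications $(3)\Rightarrow (2)\Rightarrow (1)$ are immediate, since a Ritt type~(A) equation is an order-$1$ algebraic differential equation, and order-$1$ algebraicity is a particular case of differential algebraicity. For $(2)\Rightarrow (3)$, I would differentiate the equation~\eqref{eq:Konig}: the chain rule gives $\Phi_R(\tau')\,R' = q\tau'$, so the pair $(\tau,\tau')$ satisfies the diagonal $2\times 2$ system
$$
\Phi_R\begin{pmatrix}\tau\\\tau'\end{pmatrix} = \begin{pmatrix}q & 0\\0 & q/R'\end{pmatrix}\begin{pmatrix}\tau\\\tau'\end{pmatrix},
$$
which is exactly the setting of Example~\ref{exa:Gm2}. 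Assumption $(2)$ makes $\tau$ and $\tau'$ algebraically dependent over $\K=C(x)$, so the Galois group of this system is a proper algebraic subgroup of $(C^\ast)^2$, producing integers $(\alpha_1,\alpha_2)\in\Z^2\smallsetminus\{(0,0)\}$ and $f\in\K^\ast$ with $\tau^{\alpha_1}(\tau')^{\alpha_2}=f$. If $\alpha_2\ne 0$, this rewrites as $(\tau')^{\alpha_2}=f\tau^{-\alpha_1}$, a Ritt type~(A) equation with $r=\alpha_2$; if $\alpha_2=0$, then $\tau^{\alpha_1}=f$ and differentiating gives $\tau'=(f'/\alpha_1 f)\,\tau$, once more of type~(A) with $r=j=1$.

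The substance of the proof is the contrapositive of $(1)\Rightarrow (2)$: if $\tau$ and $\tau'$ are algebraically independent over $\K$, then $\tau$ is differentially transcendent, i.e.\ $\tau,\tau',\ldots,\tau^{(n)}$ remain algebraically independent for every $n\ge 1$. I proceed by induction on $n$; the base case $n=1$ is the hypothesis. Differentiating~\eqref{eq:Konig} $n$ times yields
$$
\Phi_R(\tau^{(n)}) = \frac{q}{(R')^n}\tau^{(n)}+Z_n,\qquad Z_n\in\sum_{j<n}\K\,\tau^{(j)},
$$
so the system on $(\tau,\tau',\ldots,\tau^{(n)})$ is block lower triangular with successive diagonal characters $\chi_i=q/(R')^i$. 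The crucial observation is that $\xi_n:=\tau^{1-n}(\tau')^n\in\cR_1\subset\cR_{n-1}$ is a non-zero solution of the \emph{homogeneous} equation $\Phi_R(y)=\chi_n y$. Gauging by $\xi_n$, i.e.\ setting $W_n:=\tau^{(n)}/\xi_n$, the equation for $\tau^{(n)}$ becomes the inhomogeneous $(C,+)$-type equation
$$
\Phi_R(W_n) - W_n = Z_n/\Phi_R(\xi_n) \in \cL_{n-1}:=\mathrm{Frac}(\cR_{n-1}),
$$
over $\cL_{n-1}$, whose field of $\Phi_R$-constants is still $C$ by Lemma~\ref{lemma:constants}. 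Example~\ref{exa:Ga} applied over $\cL_{n-1}$ then gives: $\tau^{(n)}$ is transcendental over $\cL_{n-1}$ if and only if this inhomogeneous equation has no solution $W\in\cL_{n-1}$.

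The main obstacle is verifying this non-solvability at every $n\ge 2$: the class of $Z_n/\Phi_R(\xi_n)$ in the cohomology $\cL_{n-1}/(\Phi_R-1)\cL_{n-1}$ must be non-zero. I would prove it by a valuation analysis at $x=0$, in the spirit of Lemma~\ref{lemma:constants}: expanding a hypothetical $W\in\cL_{n-1}$ as a rational function in $\tau,\tau',\ldots,\tau^{(n-1)}$ over $\K$ and matching leading monomials under $\Phi_R-1$, the hypotheses $R(0)=0$, $R'(0)=q$ not a root of unity, and $R$ not a homography, combined with the inductive algebraic independence of $\tau,\ldots,\tau^{(n-1)}$, should force an impossible relation. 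Propagating this cohomological non-triviality through the induction is the technical heart of the proof.
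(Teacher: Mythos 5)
Your treatment of $(3)\Rightarrow(2)\Rightarrow(1)$ and of $(2)\Rightarrow(3)$ is correct and coincides with the paper's: the diagonal system for $(\tau,\tau')$, Example~\ref{exa:Gm2}, the monomial relation $\tau^{\a_1}(\tau')^{\a_2}\in\K$, and the two cases $\a_2\neq 0$ and $\a_2=0$ are handled exactly as in the text.

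The problem is in $(1)\Rightarrow(2)$. Your setup (Faà di Bruno, the gauge $W_n=\tau^{(n)}/\xi_n$ with $\xi_n=\tau^{1-n}(\tau')^n$, the reduction to an inhomogeneous equation $\Phi_R(W_n)-W_n=Z_n/\Phi_R(\xi_n)$ over $\cL_{n-1}$) is exactly the paper's. But by running the argument as an upward induction on algebraic independence, you force yourself to prove, at every level $n\geq 2$, that the class of $Z_n/\Phi_R(\xi_n)$ in $\cL_{n-1}/(\Phi_R-1)\cL_{n-1}$ is non-zero --- and you do not prove it. The proposed ``valuation analysis at $x=0$'' is only a plan: $\cL_{n-1}$ is a purely transcendental extension of $C(x)$ by $n$ indeterminates, there is no canonical extension of the $x$-adic valuation to it that interacts simply with $\Phi_R-1$, and matching ``leading monomials'' of an arbitrary rational function of $\tau,\dots,\tau^{(n-1)}$ is precisely the kind of computation the Galoisian approach is designed to avoid. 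This is a genuine gap, and it is exactly where the paper's argument is structured differently: instead of proving non-solvability of the inhomogeneous equation directly, the paper argues by contradiction, taking $n>1$ to be the \emph{minimal} order of an algebraic differential equation satisfied by $\tau$. Minimality gives for free that $\tau^{(n)}$ is \emph{algebraic} over $\K_n=\K(\tau,\dots,\tau^{(n-1)})$, and is also invoked to ensure $\tau^{(n)}\notin\K_n$; Example~\ref{exa:Ga} (the only proper algebraic subgroup of $(C,+)$ is trivial, so there is no intermediate case between ``transcendental over $\K_n$'' and ``belonging to $\K_n$'') then upgrades algebraicity of $\omega_n$ to $\omega_n\in\K_n$, hence $\tau^{(n)}\in\K_n$, a contradiction. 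In other words, only the soft direction of the Galois dichotomy is needed and the cohomology class never has to be computed. You should restructure your proof of $(1)\Rightarrow(2)$ as this contradiction on the minimal order; as written, the step you yourself identify as ``the technical heart'' is missing.
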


\begin{proof}
Notice that the implications $3\Rightarrow 2\Rightarrow 1$ are tautological.
Proving that $1\Rightarrow 3$ would end both the proof of this theorem and of Theorem~\ref{thm:typeA}: to achieve
this purpose we will rather prove that $1\Rightarrow 2\Rightarrow 3$.
\par
We start proving that $2\Rightarrow 3$.
We consider the system of functional equations
\begin{equation}\label{eq:RittDiagonalized1}
\l\{
\begin{array}{l}
\Phi_R(y_0)=qy_0,\\
\displaystyle\Phi_R(y_1)=\frac{q}{R'}y_1.
\end{array}\r.
\end{equation}
We are in the case of Example~\ref{exa:Gm2}.
The fact that $\tau$ is solution of a differential equation of order $1$ means that the system above has a basis of solutions,
namely $\tau,\tau'$, which are algebraically dependent over $\K$.
Following the example, we conclude that
$\tau^{\a_1}(\tau')^{\a_2}\in\K$,
for some integers $\a_1,\a_2$, which cannot be simultaneously zero.
If $\a_2\neq0$, it is enough to
rephrase the latter statement in Ritt's notation: There exist $r,j\in\Z$, with $r\neq 0$, and $A(x)\in\K$ such that $\tau$ is solution of the differential equation $(y')^r=A(x)y^j$, as claimed.
On the other hand, if $\a_2=0$, then $\tau^{\a_1}\in\K$, and we can conclude by taking the logarithmic derivative.
\par
We now prove by contradiction that $1\Rightarrow 2$.
So let us suppose that $\tau$ is solution of an algebraic differential equation
of order $n$ such that $n>1$ is minimal for this property. We set $\K_n:=\K(\tau,\tau',\dots,\tau^{(n-1)})$.
The definition of $n$ implies that $\tau^{(n)}$ is algebraic over $\K_n$, but $\tau^{(n)}\not\in\K_n$, otherwise we would obtain a
differential equation of order $n-1$ for $\tau$.
Notice that deriving $\Phi_R(y_0)=qy_0$, calling $y_k$ the $k$-th derivative of $y$
and using the Faà di Bruno's formula we obtain:
\[
(\Phi_R(y_0))^{(n)}=\sum_{k=1}^nB_{n,k}(R',\dots,R^{(n-k+1)})\Phi_R(y_k)=qy_n,
\]
where $B_{n,k}(x_1,\dots,x_{n-k+1})$ are the Bell polynomials defined by the multivariate identity:
$\exp\l(\sum_{j=1}^{\infty}x_j\frac{t^j}{j!}\r)=\sum_{n\geq k\geq 0}B_{n,k}(x_1,\dots,x_{n-k+1})\frac{t^n}{n!}$.
As $B_{n,n}(x_1)=x_{1}^{n}$,
replacing the $\Phi_R(y_k)$'s recursively up to $y_{n-1}$, we obtain an expression of the form:
\[
\Phi_R(y_n)=\frac{q}{(R')^n}y_n+\sum_{k=1}^{n-1}A_{n,k}(x)y_k,
\]
where $A_{n,k}(x)\in C(x)$ is a rational function, which is actually a rational expression in the derivatives of $R$.
We set $b:= \sum_{k=1}^{n-1}A_{n,k}(x)\tau^{(k)}\in\K_n$
and $z=(\tau')^n\tau^{1-n}\in \K_2\subset\K_n$, to simplify the notation.
We deduce from the functional equation above that $\omega_n:=\frac{\tau^{(n)}}{z}$ verifies the functional equation
\begin{equation}\label{eq:Ritt2}
    \Phi_R(\omega_n)=\omega_n+\frac{b(R')^n}{qz}.
\end{equation}
We are in the situation of Example~\ref{exa:Ga}.
Since $\omega_n$ is algebraic over $K_n$, we conclude that $\omega_n\in\K_n$.
Since $z\in\K_n$, we conclude that also $\tau^{(n)}\in\K_n$,
which is in contradiction with our choice of $n$.
This ends the proof.
\end{proof}

We deduce theorem Theorem~\ref{thm:typeA} from the statement above.

\begin{proof}[Proof of Theorem~\ref{thm:typeA}]
Let $\sigma$ be the inverse of $\tau$ for the composition.
First of all, let us notice that $\sigma$ is differentially algebraic over $C(t)$ if and only if $\tau$ is differentially algebraic over $C(x)$
(see \cite[page 344]{Boshernitzan-Rubel-1986} or \cite[page 55, (n)]{moore1896concerning}).
We consider the local change of variable $t=\tau(x)$
in \eqref{eq:MainFunctEq}, or equivalently $x=\sigma(t)$, which
transforms \eqref{eq:Konig} and $(\tau')^r=A(x)\tau^j$ into \eqref{eq:MainFunctEq} and $(\sigma')^{-r}=t^jA(\sigma)$,
respectively.
\end{proof}

\section{The algebraic case}
\label{sec:algebraicR}

We change a little bit the notation with respect to the previous section.
Let us consider the relative closure $\K$ of $C(x)$ inside $\F:=C((x))$
and $R(x)\in\K$, such that $R$ is not a homography,
$R(0)=0$, $R'(0)=q\neq 0$ {is not a root of unity}.
Then the functional equation
\[
\tau(R(x))=q\tau(x)
\]
has a formal solution at $0$.
\par
We can define
the automorphism $\Phi_R$ of $\F$ as in the previous section. Then $\Phi_R$ induces an automorphism of $\K$ and the field of constants is $C$,
as in Lemma~\ref{lemma:constants}.
Reasoning word by word as in the previous section one can prove:

\begin{thm}
Let $\tau(x)$ be a formal
solution of $\tau(R(x))=q\tau(x)$ in $C[[x]]$.
If $\tau(x)$ is differentially algebraic over $\K$, then it satisfies a differential equation
of the form $(y')^r=y^jA(x)$, where $r,j$ are integers, with $r\neq 0$,
and $A(x)\in\K$.
\end{thm}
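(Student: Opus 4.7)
The plan is to mimic the proof of Theorem~\ref{thm: main} verbatim, after checking that every ingredient used there survives the replacement of $C(x)$ by its relative algebraic closure $\K$ in $\F$.

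First I would confirm that $\Phi_R$ induces an automorphism of $\K$. If $f\in\K$ is algebraic over $C(x)$, then $\Phi_R(f)=f\circ R$ is algebraic over $C(R)$, and since $R\in\K$ is itself algebraic over $C(x)$, so is $\Phi_R(f)$; hence $\Phi_R(\K)\subseteq\K$. For surjectivity, the hypothesis $R(0)=0$, $R'(0)\neq 0$ guarantees that $R$ has a compositional inverse $S\in xC[[x]]$, and a substitution argument (from $P(x,R(x))=0$ one obtains $P(S(y),y)=0$) shows that $S$ is algebraic over $C(x)$ as well, hence $S\in\K$, so $\Phi_R$ is invertible on $\K$. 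The identity $\F^{\Phi_R}=C$ and the non-periodicity of $\Phi_R$ then carry over unchanged from Lemma~\ref{lemma:constants}, since that proof only invoked $R(0)=0$ and $q=R'(0)$ not a root of unity.

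Next I would observe that the derivation $d/dx$ extends uniquely from $C(x)$ to the algebraic extension $\K$, so that $R',R'',\dots$, and thus the Faà di Bruno coefficients $A_{n,k}$ appearing in the expansion of $\Phi_R(y_n)$, all lie in $\K$. The Picard-Vessiot formalism of \S\ref{sec:Galois} only requires that the base field be a difference field with algebraically closed field of constants, which is satisfied here; in particular, the two building blocks (Examples~\ref{exa:Gm2} and \ref{exa:Ga}) apply over $\K$ without change.

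With these verifications in hand, the argument of the proof of Theorem~\ref{thm: main} runs through literally. Supposing $\tau$ is differentially algebraic of minimal order $n$, one sets $z=(\tau')^n\tau^{1-n}$ and $\omega_n=\tau^{(n)}/z$; one checks by the Faà di Bruno identity that $\omega_n$ satisfies a functional equation of the additive type of Example~\ref{exa:Ga}, and algebraicity of $\omega_n$ over $\K_n:=\K(\tau,\dots,\tau^{(n-1)})$ forces $\omega_n\in\K_n$, hence $\tau^{(n)}\in\K_n$, contradicting minimality unless $n\leq 1$. From $n\leq 1$ one deduces, via the multiplicative argument of Example~\ref{exa:Gm2} applied to the diagonal system with characters $q$ and $q/R'$, that $\tau^{\a_1}(\tau')^{\a_2}\in\K$ for some $(\a_1,\a_2)\in\Z^2\setminus\{(0,0)\}$, and the desired form $(y')^r=y^jA(x)$ follows (in the degenerate case $\a_2=0$ one takes a logarithmic derivative). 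The only genuinely new content compared with the rational case is the stability of $\K$ under $\Phi_R$ and under $d/dx$, so this is the step I would expect to be the main obstacle, modest though it is.
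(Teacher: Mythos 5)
Your proposal is correct and follows exactly the route the paper takes: the paper's own "proof" consists of observing that $\Phi_R$ induces an automorphism of $\K$ with field of constants $C$ and then stating that one reasons word for word as in the proof of Theorem~\ref{thm: main}. You actually supply more detail than the paper does (the stability of $\K$ under $\Phi_R$ via the algebraicity of the compositional inverse, and under $d/dx$), which is exactly the right place to be careful.
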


\begin{rmk}
The fact that $\K/C(x)$ is algebraic implies that
being differentially algebraic over $\K$ is equivalent to being differentially algebraic over $C(x)$. Since $A(x)\in\K$, there exists $P(x,T)\in C[x,T]$ such that
$P(x,A(x))=0$. Then $P(x,(y')^ry^{-j})=0$ provides a differential equation for
$\tau$ over $C(x)$.
\end{rmk}

\providecommand{\bysame}{\leavevmode\hbox to3em{\hrulefill}\thinspace}
\providecommand{\MR}{\relax\ifhmode\unskip\space\fi MR }
\providecommand{\MRhref}[2]{%
  \href{http://www.ams.org/mathscinet-getitem?mr=#1}{#2}
}
\providecommand{\href}[2]{#2}

\end{document}